\newtheorem{thm}{Theorem}
\newtheorem{propo}{Proposition}
\newtheorem{lema}{Lemma}
\newtheorem{coro}{Corollary}
\newtheorem{defi}{Definition}
\newtheorem{rmk}{Remark}
\newtheorem{ejem}{Example}
\newcounter{letapar}
\begin{document}
\bibliographystyle{plain}
\begin{frontmatter}
\title{Combined matrices of almost strictly sign regular matrices}



\author[1]{P. Alonso} 
\ead{palonso@uniovi.es}
\author[2]{J.M. Pe\~na}
\ead{jmpena@unizar.es}
\author[1]{M.L. Serrano}
\ead{mlserrano@uniovi.es}

\address[1]{Departamento de Matem\'aticas, Universidad de Oviedo, Spain}
\address[2]{Departamento de Matem\'atica Aplicada, Universidad de Zaragoza, Spain}

\begin{abstract}
The combined matrix is a very useful concept for many applications. Almost strictly sign regular (ASSR) matrices form an important structured class of matrices   with to possible zero patterns  , which are either type-I staircase or type-II staircase. We prove that, under an irreducibility condition, the pattern of zero and nonzero entries of an ASSR matrix is preserved by the corresponding combined matrix. Without the irreducibility condition, it is proved that type-I and type-II staircase is still preserved. Illustrative numerical examples are included.
\end{abstract}

\begin{keyword}
combined matrices \sep almost strictly sign regular matrices
\MSC 65F05 \sep 65F15\sep 65F35
\end{keyword}
\end{frontmatter}

\section{Introduction}

Given a nonsingular real matrix $A$ of order $n$, the combined matrix, $C(A)$, is the Hadamard (entry wise) product $C(A)=A \circ (A^{-1})^T$. It is well known that each row (column) sum of $C(A)$ is equal to one (see \cite{HORN1991}). Combined matrices give the relationship between the eigenvalues and diagonal entries of a diagonalizable matrix (see \cite{HORN1991}). Furthermore, combined matrices also appear in the chemistry field (see \cite{MCAVOY1983}). If we consider the case when $C(A)$ is nonnegative, it has interesting properties and applications since it is a doubly stochastic matrix (see \cite{BRUALDI1988, MOURAD2013}).

In recent years, several studies about combined matrices have been carried out (see \cite{BRU2014,BRU2016,FIEDLER2011,FIEDLER2012}). In particular, in \cite{BRU2014,BRU2016}, the authors study some kinds of matrices such that their combined matrices are nonnegative, analyzing their relationship with the zero pattern of $A$. In this case, that is, when $C(A)$ is nonnegative, the combined matrix is doubly stochastic. Among the considered kinds of matrices, we can mention $M$-matrices, $H$-matrices, $G$-matrices and sign regular matrices.

A very important class of structured matrices due to its applications is the class of sign regular (SR) matrices. A matrix is SR if all its minors of the same order have the same sign. These matrices arise naturally in many areas of mathematics, statistics, mechanics, computer-aided geometric design, economics, etc. (see \cite{ANDO1987, PENA2003}). Totally positive matrices (matrix with all its minors nonnegative) are very important in many applications and they are trivially SR matrices. In this work we analyze the properties verified by the combined matrix associated with the almost strictly sign regular (ASSR) matrices, a subclass of the nonsingular SR matrices (see \cite{ALONSO2015}).   These matrices have all their nontrivial minors of the same order with the same strict sign and present to possible zero patterns, which are either type-I staircase or type-II staircase.  

   In this paper we prove that irreducible ASSR matrices and their combined matrices have the same pattern of zero and nonzero entries. Without the irreducibility condition, we also prove that type-I and type-II staircases are still preserved. The inheritance of the zero pattern is very important for saving computational cost when applying numerical methods. The results and examples of this paper show that the class of ASSR matrices is precisely the adequate framework for dealing with the problem of the preservation of zero pattern for combined matrices.   

This work is organized as follows. Section 2 is dedicated to general results for combined matrices. In Section 3 we present several definitions and auxiliary results for ASSR matrices. Irreducible ASSR matrices are characterized in Section 4. Combined matrices of irreducible ASSR matrices are also analyzed in Section 4, while the general case is addressed in the next section. Finally, SR matrices with SR combined matrix are studied in Section 6. Throughout the work a wide collection of illustrative examples of the proven properties is presented.

\section{General results for combined matrices}

For $k, n \in \mathbb{N}$, with $1 \leq k \leq n$, $Q_{k,n}$ denotes the set of all increasing sequences of $k$ natural numbers not greater than $n$. For $\alpha=(\alpha_{1}, \dots, \alpha_{k})$, $\beta=(\beta_{1}, \dots, \beta_{k}) \in Q_{k,n}$ and $A$ an $n \times n $ real matrix, we denote by  $A[\alpha|\beta]$ the $k \times k$ submatrix of $A$ containing rows $\alpha_{1}, \dots, \alpha_{k}$ and columns $\beta_{1}, \dots, \beta_{k}$ of $A$. If $\alpha=\beta$, we denote by $A[\alpha]:=A[\alpha|\alpha]$ the corresponding principal submatrix.

First, we present some definitions that may be of interest.

Let $A$ be a nonsingular real matrix  of order $n$ and we denote by $N$ the set of indices $\{1,2,\dots,n\}$. Given two indices $i,j \in N$, we denote by $A_{ij}$ the $(i,j)$ complementary minor of $A$, i.e., the determinant of the submatrix obtained from $A$ by removing row $i$ and column $j$.

Given a real matrix $A=(a_{ij})_{1 \leq i,j \leq n}$, we say it is nonnegative (entry wise) if $a_{ij} \geq 0$ for all $i,j \in N$ and we denote it as $A \geq 0$. If $-A \geq 0$, then we say that $A$ is nonpositive.

Let us recall that, given two matrices of order $n$, $A=(a_{ij})_{1 \leq i,j \leq n}$ and $B=(b_{ij})_{1 \leq i,j \leq n}$, the Hadamard (entry wise) product of $A$ and $B$ is defined as $A \circ B=(a_{ij} b_{ij})$, $\forall \; 1 \leq i,j \leq n$.

Taking into account the previous concepts, we can define the combined matrix in the following way:

\begin{defi}
The combined matrix of a nonsingular real matrix $A=(a_{ij})_{1 \leq i,j \leq n}$ is defined as $C(A)=A \circ (A^{-1})^T$. Then
the elements of $C(A)$ are denoted by $c_{ij}$, with $1 \leq i,j \leq n$, and can be expressed as
\begin{equation}\label{combinadapormenores}
 \quad C(A)=\left( \frac{1}{\det A}(-1)^{i+j}a_{ij}A_{ij}\right).
\end{equation}
\end{defi}

\begin{rmk}\label{aij=0->cij=0}
Note that if $C(A)=(c_{ij})_{1 \leq i,j \leq n}$ is the combined matrix of a  matrix $A$,  it follows trivially from (\ref{combinadapormenores}) that if $a_{ij}=0$ then $c_{ij}=0$.
\end{rmk}

Taking into account Lemma 8 (ii) of \cite{BRU2014}, we can conclude that the combined matrix of a triangular matrix is the identity matrix.

As usual, hereinafter we denote by $P_n$ (backward identity matrix) and by $S_n$ the matrices $n\times n$, whose element $(i, j)$ are defined as:

\begin{defi}\label{def_Pn}
Let us denote by $P_n=(p_{ij})_{1\leq i,j \leq n}$ (backward identity matrix) and as $S_n=(s_{ij})_{1\leq i,j \leq n}$,  the matrices defined by
$$
p_{ij}=\left\{\begin{array}{lr}
                               1, & \qquad \mathrm{if } \  i+j=n+1,\\
                               0, &   \mathrm{otherwise},
                               \end{array}\right., \quad
s_{ij}=\left\{\begin{array}{lr}
                               (-1)^{i-1}, & \qquad \mathrm{if } \  i=j,\\
                               0, &   \mathrm{otherwise},
                               \end{array}\right.
                               $$
that is,
$$
P_n=\left(
      \begin{array}{ccccc}
        0 & 0 & \cdots & 0 & 1 \\
        0 & 0 & \cdots & 1 & 0 \\
        \vdots &  \vdots & \vdots &  \vdots & \vdots \\
        0 & 1 & \cdots & 0 & 0 \\
        1 & 0 & \cdots & 0 & 0\\
      \end{array}
    \right), \quad
S_n=\left(
      \begin{array}{ccccc}
        1 & 0 & \cdots & 0 & 0 \\
        0 & -1 & \cdots & 0 & 0 \\
        \vdots &  \vdots & \vdots &  \vdots & \vdots \\
        0 & 0 & \cdots & (-1)^{n-2} & 0 \\
        0 & 0 & \cdots & 0 & (-1)^{n-1}\\
      \end{array}
    \right).
$$
\end{defi}

The left product by the matrix $P_n$ produces a permutation in the rows of the matrix $A$, so, the row $k$ of $P_n A$ is the row $n-k+1$ of matrix $A$, with $k= 1,\dots,n$. On the other hand, if we denote by $d_{ij}$, with $1\leq i,j \leq n$, the elements of $S_n A S_n$, it is possible to prove that
$$
d_{ij}=\left\{\begin{array}{lr}
                               a_{ij}, & \qquad \mathrm{if } \  i+j=\mathrm{even},\\
                               -a_{ij}, & \qquad \mathrm{if } \  i+j=\mathrm{odd}.
                               \end{array}\right.
$$

Taking into account Lemma 5.4.2 (b) of \cite{HORN1991}, we present the following result:

\begin{lema}\label{Lema-HORN}
Let $A=(a_{ij})_{1 \leq i,j \leq n}$ be a $n \times n$ nonsingular matrix. Then the following properties are satisfied:
\begin{enumerate}
\item[a)] If $D$ and $E$ are nonsingular diagonal matrices, then $C(D A E)=C(A)$. In particular $C(A)=C(-A)=C(S_n A)=C(A S_n)=C(S_n A S_n)$.
\item[b)] If $P$ and $Q$ are permutation matrices, then $C(P A Q)=P C(A) Q$. In particular $C(P_n A)=P_n C(A)$.
\end{enumerate}
\end{lema}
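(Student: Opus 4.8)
The plan is to argue directly from the definition $C(A)=A\circ (A^{-1})^{T}$, rather than through the cofactor expression \eqref{combinadapormenores}, because the way the inverse and the transpose behave under diagonal scaling and under permutations is completely transparent at that level. The two elementary facts I will use are: (i) diagonal matrices and permutation matrices are symmetric up to inversion, so $(D^{-1})^{T}=D^{-1}$ for diagonal $D$ and $P^{-1}=P^{T}$ for a permutation $P$; and (ii) the Hadamard product is computed entrywise, hence it commutes with any common row/column scaling or row/column permutation applied to both factors.

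For part a), I would write $D=\mathrm{diag}(d_{1},\dots,d_{n})$ and $E=\mathrm{diag}(e_{1},\dots,e_{n})$ with all $d_{i},e_{j}\neq 0$. The $(i,j)$ entry of $DAE$ is $d_{i}a_{ij}e_{j}$, while $((DAE)^{-1})^{T}=(E^{-1}A^{-1}D^{-1})^{T}=D^{-1}(A^{-1})^{T}E^{-1}$, whose $(i,j)$ entry is $d_{i}^{-1}\big((A^{-1})^{T}\big)_{ij}e_{j}^{-1}$. Multiplying entrywise, the factors $d_{i}$ and $e_{j}$ cancel and one is left with $a_{ij}\big((A^{-1})^{T}\big)_{ij}$, i.e.\ the $(i,j)$ entry of $C(A)$; hence $C(DAE)=C(A)$. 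The particular cases then follow by choosing $(D,E)$ to be $(-I_{n},I_{n})$, $(S_{n},I_{n})$, $(I_{n},S_{n})$ and $(S_{n},S_{n})$, each a pair of nonsingular diagonal matrices (recall $S_{n}$ is diagonal with entries $\pm 1$).

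For part b), let $P$ realize the row permutation $\sigma$ and $Q$ the column permutation $\tau$, so that $(PXQ)_{ij}=X_{\sigma(i)\tau(j)}$ for every matrix $X$. Since $P^{-1}=P^{T}$ and $Q^{-1}=Q^{T}$, we get $((PAQ)^{-1})^{T}=(Q^{T}A^{-1}P^{T})^{T}=P(A^{-1})^{T}Q$, and therefore $C(PAQ)=(PAQ)\circ\big(P(A^{-1})^{T}Q\big)$. Comparing $(i,j)$ entries and using (ii), $\big((PAQ)\circ(P(A^{-1})^{T}Q)\big)_{ij}=a_{\sigma(i)\tau(j)}\big((A^{-1})^{T}\big)_{\sigma(i)\tau(j)}=(C(A))_{\sigma(i)\tau(j)}=(PC(A)Q)_{ij}$, so $C(PAQ)=PC(A)Q$. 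Taking $Q=I_{n}$ and $P=P_{n}$ gives $C(P_{n}A)=P_{n}C(A)$.

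I do not expect a genuine obstacle here; the only points requiring a little care are tracking the transpose when inverting $DAE$ (respectively $PAQ$) and observing that diagonal and permutation matrices are invariant under transposition up to inversion, so that the spurious scalar/permutation factors end up in exactly the positions where they cancel against the factors coming from $DAE$ (respectively $PAQ$). This also simply recasts Lemma 5.4.2 (b) of \cite{HORN1991} in the notation used here.
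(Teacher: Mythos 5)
Your proof is correct. Note that the paper itself does not prove this lemma at all: it simply imports it from Lemma 5.4.2(b) of Horn and Johnson's \emph{Topics in Matrix Analysis}, so there is no internal argument to compare against. Your direct entrywise verification from $C(A)=A\circ (A^{-1})^{T}$ --- using $(DAE)^{-1}=E^{-1}A^{-1}D^{-1}$ with diagonal matrices symmetric, and $P^{-1}=P^{T}$, $Q^{-1}=Q^{T}$ for permutation matrices, so that the scaling factors cancel and the permutations pass through the Hadamard product --- is exactly the standard argument behind the cited result, and it correctly covers the special cases $C(-A)=C(S_nA)=C(AS_n)=C(S_nAS_n)=C(A)$ and $C(P_nA)=P_nC(A)$ since $S_n$ is a nonsingular diagonal matrix and $P_n$ a permutation matrix.
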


It is well known that each row (column) sum of $C(A)$ is equal to one (see Lema 5.4.2 (a) of \cite{HORN1991}). Considering this result, it is immediate to affirm that:

\begin{lema}\label{nonoposi}
Let $A$ be a nonsingular matrix. Then $C(A)$ is not nonpositive.
\end{lema}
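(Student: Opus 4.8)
The plan is to argue by contradiction, using the single fact recalled just before the statement: every row sum of $C(A)$ equals $1$. Suppose, on the contrary, that $C(A)=(c_{ij})_{1\le i,j\le n}$ is nonpositive, i.e. $-C(A)\ge 0$, which by the definition above means $c_{ij}\le 0$ for all $i,j\in N$. Fix any row index $i$ (say $i=1$). Summing over that row gives $\sum_{j=1}^{n} c_{ij}\le 0$, since it is a sum of finitely many nonpositive numbers. But by Lemma 5.4.2 (a) of \cite{HORN1991}, that same sum equals $1$. This yields $1\le 0$, a contradiction. Hence $C(A)$ cannot be nonpositive.

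There is essentially no obstacle here: the statement is an immediate consequence of the row-sum normalization of combined matrices, and the only thing to be careful about is matching the paper's own convention that ``nonpositive'' means every entry is $\le 0$ (equivalently $-C(A)\ge 0$), so that a single nonpositive row sum already contradicts the value $1$. One could equally phrase the conclusion positively: every row of $C(A)$ must contain at least one strictly positive entry, since otherwise that row would sum to a nonpositive number rather than to $1$. I would keep the write-up to these two or three sentences, as it will be used mainly as a stepping stone (together with the analogous observation that $C(A)$ is not nonnegative in general, or is doubly stochastic when it is nonnegative) in the later sections on ASSR matrices.
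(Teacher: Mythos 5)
Your argument is correct and is exactly the one the paper intends: the lemma is stated as an immediate consequence of the fact that each row sum of $C(A)$ equals one (Lemma 5.4.2 (a) of \cite{HORN1991}), which is precisely your contradiction with all entries being $\le 0$. No differences worth noting.
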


\section{Definitions and auxiliary results for ASSR matrices}

Taking into account the importance of the signs of minors, we define the vector of signatures, which is where these signs are stored.

\begin{defi}\label{defsig}
A vector $\varepsilon=(\varepsilon_1, \varepsilon_2, \dots , \varepsilon_ n)\in \mathbb{R}^n$ is a signature sequence, or simply,
a signature,   if   $\varepsilon_i\in \left\{+1, -1 \right\}$, $\forall i \in \mathbb{N}$, $ 1 \leq i \leq n$.
\end{defi}

Next, we define SR and SSR matrix.

\begin{defi}
A real $n\times n$ matrix  $A$ is said to be SR, with signature $\varepsilon=(\varepsilon_1, \varepsilon_2, \dots , \varepsilon_n)$, if all its minors  satisfy that
\begin{equation}\label{def_sr_gen}
  \varepsilon_m\det A[\alpha | \beta ] \geq 0, \quad \alpha , \beta \in Q_{m,n}, \qquad m \leq n.
\end{equation}
When the sign of minors is strict, we will say that the matrix is strictly sign regular (SSR).
\end{defi}

Note that $\varepsilon_i=+1$ ($\varepsilon_i=-1$) $\forall \; 1 \leq i \leq n$ implies that all the minors of order $i$ are positive (negative) or zero. Considering a nonsingular matrix,  all the minors of order $i$ cannot be zero.

It should be noted that if $A$ is a nonsingular SR matrix with signature $\varepsilon=(\varepsilon_1, \varepsilon_2, \dots , \varepsilon_n)$, then (see Theorem 3.3 of \cite{ANDO1987}) the signature of the matrix $S_n A^{-1}S_n$ is obtained from the signature of matrix $A$ as
$$\varepsilon_i(S_n A^{-1}S_n)=\varepsilon_n \varepsilon_{n-i},$$
with   $\varepsilon_0:=1$  . 

From (\ref{combinadapormenores}), it is convenient to observe the relevance of the signs of minors of order $1$, $n-1$ and $n$ ($\varepsilon_1$, $\varepsilon_{n-1}$ and $\varepsilon_n$) in the calculation of the combined matrix for the different types of SR matrices.

Let $A=(a_{ij})_{1 \leq i,j \leq n}$ be a matrix of order $n$. It is said that $A$ has a checkerboard pattern if $sign(a_{ij})=(-1)^{i+j}$ or $a_{ij}=0$ for all $i,j \in N$. The   proof   of the following result is straightforward.

\begin{lema}\label{nulos-patron}
Let $A=(a_{ij})_{1 \leq i,j \leq n}$ be a nonsingular SR matrix with signature $\varepsilon=(\varepsilon_1, \varepsilon_2, \dots , \varepsilon_n)$ and $C(A)=(c_{ij})_{1 \leq i,j \leq n}$ their combined matrix. Then either $C(A)$ or $-C(A)$ has a checkerboard pattern.
\end{lema}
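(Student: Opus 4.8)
The plan is to argue directly from the explicit expression (\ref{combinadapormenores}), $c_{ij}=\frac{1}{\det A}(-1)^{i+j}a_{ij}A_{ij}$, by determining the sign of each of the three factors $\det A$, $a_{ij}$ and $A_{ij}$ from the sign regularity of $A$.

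First I would dispose of the zero entries of $C(A)$: by Remark \ref{aij=0->cij=0}, $a_{ij}=0$ forces $c_{ij}=0$, and clearly $A_{ij}=0$ also forces $c_{ij}=0$; in either case the requirement imposed by a checkerboard pattern on the $(i,j)$ entry is vacuously satisfied. Hence it is enough to fix a pair $(i,j)$ with $a_{ij}\neq 0$ and $A_{ij}\neq 0$ and to compute $sign(c_{ij})$.

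Next I would read off the three signs from the definition of SR matrix, inequality (\ref{def_sr_gen}). The entry $a_{ij}$ is a minor of order $1$, so with $m=1$ we get $\varepsilon_1 a_{ij}\geq 0$, whence $sign(a_{ij})=\varepsilon_1$. The complementary minor $A_{ij}$, the determinant of the submatrix obtained from $A$ by deleting row $i$ and column $j$, is a minor of order $n-1$, so with $m=n-1$ we get $\varepsilon_{n-1}A_{ij}\geq 0$, whence $sign(A_{ij})=\varepsilon_{n-1}$ (for $n=1$ one uses the convention $\varepsilon_0:=1$). Finally, $\det A$ is a minor of order $n$ and, since $A$ is nonsingular, it is nonzero, so $\varepsilon_n\det A>0$ and $sign(\det A)=\varepsilon_n$. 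Substituting these into (\ref{combinadapormenores}) gives $sign(c_{ij})=\varepsilon_1\varepsilon_{n-1}\varepsilon_n\,(-1)^{i+j}$ for every $(i,j)$ with $c_{ij}\neq 0$.

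To conclude, put $\delta:=\varepsilon_1\varepsilon_{n-1}\varepsilon_n\in\{+1,-1\}$. If $\delta=+1$, then $sign(c_{ij})=(-1)^{i+j}$ for all $(i,j)$ with $c_{ij}\neq 0$, i.e. $C(A)$ has a checkerboard pattern; if $\delta=-1$, then $sign(c_{ij})=(-1)^{i+j+1}$, i.e. $-C(A)$ has a checkerboard pattern. As already noted before the statement, the proof is straightforward, so I do not expect a genuine obstacle; the only mild point of care is the bookkeeping, namely keeping track of which of $C(A)$, $-C(A)$ is the checkerboard matrix as a function of the single sign $\delta=\varepsilon_1\varepsilon_{n-1}\varepsilon_n$, together with the harmless observation that vanishing entries never violate the pattern.
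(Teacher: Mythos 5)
Your proof is correct and follows exactly the route the paper intends: the paper omits the proof as ``straightforward'', but its remark just before the lemma about the relevance of $\varepsilon_1$, $\varepsilon_{n-1}$ and $\varepsilon_n$ in (\ref{combinadapormenores}) is precisely your sign computation $sign(c_{ij})=\varepsilon_1\varepsilon_{n-1}\varepsilon_n(-1)^{i+j}$ for nonzero entries, with zeros handled vacuously by the checkerboard definition.
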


Many of the involved matrices in the results that will be analyzed are structured matrices,   in the sense that   present their zero and nonzero entries grouped in certain positions; for this reason, the type-I and type-II staircase matrices are defined below (see Lema 7 de \cite{HUANG2012}).

\begin{defi}\label{def_Iescalonadas}
A real matrix $A=\left(a_{ij}\right)_{1 \leq i,j\leq n}$ is called \textit{type-I staircase} if it satisfies simultaneously the following conditions
\begin{eqnarray}
   a_{11} \not= 0, \ a_{22}\not= 0, \dots ,  a_{nn}\not= 0; \label{diagnonulatipoI}\\
   a_{ij}=0, i>j \Rightarrow a_{kl}=0, \ \forall \, l \leq j, \ i \leq k;\label{cerosdebajotipoI}\\
   a_{ij}=0, i<j \Rightarrow  a_{kl}=0, \ \forall \, k \leq i, \  j \leq l. \label{cerosarribatipoI}
\end{eqnarray}
\end{defi}

\begin{defi}\label{def_IIescalonadas}
A real matrix $A=\left(a_{ij}\right)_{1 \leq i,j\leq n}$ is called \textit{type-II staircase} if it satisfies simultaneously the following conditions
\begin{eqnarray}
a_{1n} \not= 0, \ a_{2,n-1}\not= 0, \  \dots , a_{n1}\not= 0;\label{diagnonulatipoII}\\
a_{ij}=0, j>n-i+1 \Rightarrow a_{kl}=0,\ \forall \, i \leq k, \; j \leq l;\label{cerosdebajotipoII}\\
a_{ij}=0, j<n-i+1 \Rightarrow a_{kl}=0,\ \forall \, k \leq i, \; l \leq j. \label{cerosarribatipoII}
\end{eqnarray}
\end{defi}

Next we define the nontrivial submatrices, which are described from the pattern of zeros of a staircase matrix (type-I or type-II).

\begin{defi}\label{sub-no-trivial-tipo-I}
For a real matrix $A=\left(a_{ij}\right)_{1 \leq i,j\leq n}$ type-I (type-II) staircase, a submatrix $A[\alpha | \beta ]$, with $\alpha, \beta \in Q_{m,n}$, is nontrivial if all its main diagonal (backward diagonal) entries are nonzero, that is, if $a_{\alpha_i,\beta_i}\not=0$ ($a_{\alpha_i,\beta_{m-i+1}}\not=0$), $\forall \; i=1,\dots , m$. The minor associated to a nontrivial submatrix ($A[\alpha | \beta]$) is called a nontrivial minor ($\det A [\alpha | \beta ]$).
\end{defi}

Finally, the ASSR matrices are introduced, which will be studied in more depth in this work.

\begin{defi}
An $n\times n$ real matrix  $A$ is said to be ASSR with signature $\varepsilon=(\varepsilon_1, \varepsilon_2, \dots , \varepsilon_ n)$ if it is either type-I or type-II staircase and all its nontrivial minors $\det A[\alpha | \beta ]$ satisfy that
\begin{equation}\label{def_assr_gen}
  \varepsilon_m\det A[\alpha | \beta ]>0, \quad \alpha , \beta \in Q_{m,n}, \qquad m \leq n.
\end{equation}
\end{defi}

Note that if $A$ is ASSR, then it is nonsingular (see Theorem 1.32 de \cite{SERRANO2017}).

The relationship between the type-I and type-II staircase matrices is obtained from the backward identity matrix:

\begin{rmk}\label{tipoIPntipoII}
Let $A=(a_{ij})_{1 \leq i,j \leq n}$ be an $n\times n$ real matrix and $P_n$ be the backward identity matrix. Then:

\begin{tabular}{c}
if $A$ is type-I staircase $\Rightarrow$ $P_nA$ is type-II  staircase,\\
if $A$ is type-II staircase $\Rightarrow$ $P_nA$ is type-I staircase.\\
\end{tabular}
\end{rmk}

Taking into account that $P_nA$ is a matrix whose rows are the same that those of $A$, but reordered, it is possible to obtain the signature of an ASSR matrix $A$ from the signature of $P_nA$ (see Corollary 1 de \cite{ALONSO2015}).

\begin{propo}\label{sigPnA}
A matrix $A=(a_{ij})_{1 \leq i,j \leq n}$ is ASSR if and only if $P_nA$ is also ASSR. Furthermore, if the signature of $A$ is $\varepsilon=(\varepsilon_1, \varepsilon_2, \dots , \varepsilon_n)$, then the signature of $P_nA$ is $\varepsilon'=(\varepsilon'_1, \varepsilon'_2,\dots , \varepsilon'_n)$, with
$\varepsilon'_m=(-1)^{\frac{m(m-1)}{2}}\varepsilon_m$
\end{propo}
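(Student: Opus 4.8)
The plan is to reduce everything to a single identity relating the minors of $P_nA$ to those of $A$, and then to transfer the sign and nontriviality conditions through that identity. Since $P_n^2=I_n$, the stated equivalence is symmetric, so it suffices to prove: \emph{if $A$ is ASSR with signature $\varepsilon$, then $P_nA$ is ASSR with signature $\varepsilon'$ given by $\varepsilon'_m=(-1)^{m(m-1)/2}\varepsilon_m$}; applying this to $P_nA$ then yields the converse (and the double application gives $\varepsilon''_m=\varepsilon_m$, consistently, because $(-1)^{m(m-1)/2}$ squares to $1$). By Remark~\ref{tipoIPntipoII} I would treat the case where $A$ is type-I staircase, so $P_nA$ is type-II staircase; the type-II case for $A$ is entirely analogous, since the two ingredients below are insensitive to the type.

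First I would record the minor identity. Recall that row $k$ of $P_nA$ is row $n-k+1$ of $A$. Hence, for $\alpha=(\alpha_1,\dots,\alpha_m),\ \beta=(\beta_1,\dots,\beta_m)\in Q_{m,n}$, the submatrix $(P_nA)[\alpha|\beta]$ consists of rows $n+1-\alpha_1,\dots,n+1-\alpha_m$ of $A$ restricted to columns $\beta$, listed in this (decreasing) order. Writing $\alpha^{\ast}:=(n+1-\alpha_m,\,n+1-\alpha_{m-1},\,\dots,\,n+1-\alpha_1)\in Q_{m,n}$ for the increasing reordering, $(P_nA)[\alpha|\beta]$ is obtained from $A[\alpha^{\ast}|\beta]$ by reversing the order of its $m$ rows; as the reversal permutation on $m$ elements has sign $(-1)^{m(m-1)/2}$,
\begin{equation*}
\det (P_nA)[\alpha|\beta]=(-1)^{m(m-1)/2}\,\det A[\alpha^{\ast}|\beta].
\end{equation*}
Since $\alpha\mapsto\alpha^{\ast}$ is an involution of $Q_{m,n}$, this is a bijection between the order-$m$ submatrices of $P_nA$ and those of $A$ (the column index $\beta$ being unchanged).

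Next I would check that this bijection respects nontriviality. The submatrix $(P_nA)[\alpha|\beta]$ is nontrivial as a submatrix of the type-II staircase matrix $P_nA$ exactly when its backward-diagonal entries $(P_nA)_{\alpha_i,\beta_{m-i+1}}=A_{n+1-\alpha_i,\beta_{m-i+1}}$ are all nonzero ($i=1,\dots,m$); putting $j=m-i+1$ and using $\alpha^{\ast}_j=n+1-\alpha_{m-j+1}$, these are precisely the main-diagonal entries $A_{\alpha^{\ast}_j,\beta_j}$ of $A[\alpha^{\ast}|\beta]$, i.e.\ the condition defining nontriviality of $A[\alpha^{\ast}|\beta]$ as a submatrix of the type-I staircase matrix $A$. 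Hence $(P_nA)[\alpha|\beta]$ is nontrivial if and only if $A[\alpha^{\ast}|\beta]$ is nontrivial.

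Finally I would assemble the proof: if $A$ is ASSR with signature $\varepsilon$, then for every nontrivial minor of $A$ of order $m$ one has $\varepsilon_m\det A[\alpha^{\ast}|\beta]>0$, which by the minor identity (and since $(-1)^{m(m-1)/2}$ is its own reciprocal) is equivalent to $\big((-1)^{m(m-1)/2}\varepsilon_m\big)\det (P_nA)[\alpha|\beta]>0$; as $(P_nA)[\alpha|\beta]$ runs over all nontrivial submatrices of the type-II staircase matrix $P_nA$, this shows $P_nA$ is ASSR with signature $\varepsilon'_m=(-1)^{m(m-1)/2}\varepsilon_m$, and applying the same argument to $P_nA$ gives the converse. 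The main obstacle is purely combinatorial bookkeeping: keeping the index reversal $\alpha\mapsto\alpha^{\ast}$ straight and verifying that the two a priori different notions of nontriviality (main diagonal for type-I, backward diagonal for type-II) correspond under it; once that is settled, the factor $(-1)^{m(m-1)/2}$ comes out of the reversal permutation with no further computation.
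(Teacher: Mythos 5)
Your proof is correct: the row-reversal identity $\det (P_nA)[\alpha|\beta]=(-1)^{m(m-1)/2}\det A[\alpha^{\ast}|\beta]$, the observation that $\alpha\mapsto\alpha^{\ast}$ is an involution of $Q_{m,n}$, and the check that backward-diagonal nontriviality for the type-II matrix $P_nA$ matches main-diagonal nontriviality for the type-I matrix $A$ (and vice versa) together give exactly the claimed equivalence and the signature formula $\varepsilon'_m=(-1)^{m(m-1)/2}\varepsilon_m$, with the converse following from $P_n^2=I_n$. The paper itself does not prove this proposition but cites it as Corollary 1 of the reference on the characterization of ASSR matrices; your self-contained argument is the natural one behind that citation, so there is nothing to flag beyond noting that you have supplied details the paper delegates to the literature.
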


\section{Combined matrices of irreducible ASSR matrices}

A square matrix $A$ is called reducible if there exists  a permutation matrix $P$ such that $PAP^T$ is block triangular:
$$
PAP^T=\left(\begin{array}{cc}
B&0\\
C&D
\end{array}\right),
$$
where $B$ and $D$ are square matrices. Otherwise, $A$ is said to be irreducible. Clearly, an $n \times n$ matrix $A$ is reducible if and only if there exists an ordering $(i_1, i_2,\ldots ,i_k,j_{k+1},$ $j_{k+2}, \ldots, j_n)$ of $(1,2,\ldots,n)$ such that $A[i_1, i_2,\ldots ,i_k|j_{k+1},$ $j_{k+2}, \ldots, j_n]=0$.

Let us recall that a totally   positive   square matrix is oscillatory if there exists a positive integer $k$ such that $A^k$ is   stricly   totally positive. It is well-known that an oscillatory matrix is irreducible. We now characterize the irreducible ASSR matrices of type-I staircase.

\begin{thm} \label{A}
Let $A$ be a type-I staircase ASSR matrix. Then $A$ is irreducible if and only if  its subdiagonal and superdiagonal have no zero entries, that is, $a_{j+1,j}\ne 0$ and $a_{j,j+1}\ne 0$ for all $j=1,\dots,n-1$.
\end{thm}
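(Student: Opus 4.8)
The plan is to prove both implications separately, using the characterization of reducibility in terms of a vanishing off-diagonal block $A[i_1,\dots,i_k\mid j_{k+1},\dots,j_n]=0$, together with the structural constraints that the type-I staircase conditions \eqref{diagnonulatipoI}--\eqref{cerosarribatipoI} impose on where zeros of $A$ can sit. First I would dispatch the easy direction: suppose some subdiagonal or superdiagonal entry vanishes, say $a_{j+1,j}=0$ for some $j\le n-1$. By the staircase propagation rule \eqref{cerosdebajotipoI}, $a_{j+1,j}=0$ forces $a_{kl}=0$ for all $k\ge j+1$ and all $l\le j$; in other words the whole lower-left block $A[j+1,\dots,n\mid 1,\dots,j]$ is zero. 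Taking the ordering $(1,2,\dots,j,j+1,\dots,n)$ — i.e.\ $P$ the identity — this is exactly a vanishing block of the required shape, so $A$ is reducible. The case $a_{j,j+1}=0$ is symmetric: \eqref{cerosarribatipoI} forces the upper-right block $A[1,\dots,j\mid j+1,\dots,n]=0$, and one reorders by putting the columns $j+1,\dots,n$ first and rows $1,\dots,j$ last to exhibit reducibility. So the contrapositive of ``irreducible $\Rightarrow$ no zeros on sub/superdiagonal'' is done.

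For the converse — assuming $a_{j+1,j}\ne 0$ and $a_{j,j+1}\ne 0$ for all $j$, show $A$ is irreducible — I would argue by contradiction: suppose $A$ is reducible, so there is an ordering with $A[i_1,\dots,i_k\mid j_{k+1},\dots,j_n]=0$, where $\{i_1,\dots,i_k\}$ and $\{j_{k+1},\dots,j_n\}$ partition $\{1,\dots,n\}$ into two complementary nonempty sets; call them $R$ (the ``row set'', of size $k$) and $C=N\setminus R$ (the ``column set'', of size $n-k$), so that $a_{rc}=0$ for every $r\in R$, $c\in C$. Note that $R$ and $C$ are disjoint and cover $N$, and the diagonal entries $a_{ii}\ne 0$ by \eqref{diagnonulatipoI}, so in particular no index can be simultaneously a ``zero row'' index and a ``zero column'' index in a way that would kill a diagonal entry — this is automatic since $R\cap C=\emptyset$. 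The key combinatorial step is to show that such a partition is impossible once all sub- and superdiagonal entries are nonzero. Pick $r\in R$ and $c\in C$ with $|r-c|$ minimal. If $r<c$, then along the way from $r$ to $c$ consecutive integers must cross from $R$ into $C$, so there is an index $p$ with $p\in R$, $p+1\in C$ (or we can reduce to this by minimality); but then $a_{p,p+1}=0$ since $p\in R$, $p+1\in C$, contradicting the superdiagonal hypothesis. If $r>c$, symmetrically there is $p$ with $p+1\in R$, $p\in C$, forcing $a_{p+1,p}=0$, contradicting the subdiagonal hypothesis. Either way we reach a contradiction, so no such partition exists and $A$ is irreducible.

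The main obstacle is the second implication, and within it the precise extraction of a ``bad'' adjacent pair $(p,p+1)$ straddling the partition: one must be careful that $R$ and $C$ are the row/column index \emph{sets} of the vanishing block (which need not be intervals), argue that since they partition $\{1,\dots,n\}$ there must be some integer $p$ with $p$ and $p+1$ in different parts, and then check that in \emph{both} orderings of that pair one of the two hypotheses ($a_{p+1,p}\ne 0$ or $a_{p,p+1}\ne 0$) is violated — here one does \emph{not} even need the staircase structure beyond the nonvanishing diagonal, only the nonvanishing of the two adjacent off-diagonals, which makes the argument clean. A secondary point worth spelling out is that the reducibility definition allows an arbitrary simultaneous row/column permutation $PAP^T$, so $R$ and $C$ are genuinely a partition of the same index set $N$ (not independent row- and column-index subsets), which is exactly what makes the ``consecutive integers must switch parts'' pigeonhole work.
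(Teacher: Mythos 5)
Your proposal is correct and follows essentially the same route as the paper: the easy direction uses the staircase propagation of a single zero sub/superdiagonal entry to a full vanishing off-diagonal block, and the converse extracts an adjacent pair straddling the row/column partition of a vanishing block (the paper locates it via the largest row index $i_k$ rather than your pigeonhole on consecutive integers, but the idea is identical, and neither uses the staircase structure in that direction). One cosmetic slip: you have the two orderings swapped in the easy direction --- the vanishing lower-left block $A[j+1,\dots,n\mid 1,\dots,j]$ corresponds to the ordering $(j+1,\dots,n,1,\dots,j)$, while the upper-right block $A[1,\dots,j\mid j+1,\dots,n]$ corresponds to the identity ordering --- but either block plainly exhibits reducibility, so the argument stands.
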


\begin{proof}
First, let us assume that $A$ has a zero subdiagonal entry, i.e., $a_{i,i-1}=0$ for some $i>0$. Then $A[i,i+1,\dots,n |1,\dots,i-1]=0$ and so $A$ is reducible. If $A$ has a zero superdiagonal entry, i.e., $a_{i,i+1}=0$ for some $i<n$, then $A[1,\dots,i |i+1,\dots,n]=0$ and so $A$ is again reducible.

For the converse, let us suppose that $A$ is reducible. So, it has a null submatrix of the form $A[i_1, i_2,\ldots ,i_k|j_{k+1},$ $j_{k+2}, \ldots, j_n]$, where $(i_1, i_2,\ldots ,i_k,j_{k+1},$ $j_{k+2}, \ldots, j_n)$ is a permutation of $(1,2,\ldots,n)$. Let us first assume that $i_k<n$. Then $i_{k}+1\in \{ j_{k+1}, j_{k+2}, \ldots, j_n\}$ and so $a_{i_k,i_{k}+1}=0$. Finally, let us assume that $i_k=n$. This implies that $j_n<n$. Then $j_n+1\in \{ i_1, i_2,\ldots ,i_k\}$ and so $a_{j_n+1,j_n}=0$.
\end{proof}

Regarding the zero pattern, next, we prove that, under the condition of irreducibility, the zero pattern is preserved.

\begin{thm}\label{a=0<->c=0}
Let $A=(a_{ij})_{1 \leq i,j \leq n}$ be an irreducible type-I staircase  ASSR matrix with signature $\varepsilon=(\varepsilon_1, \varepsilon_2,\dots , \varepsilon_n)$. Let $C(A)=(c_{ij})_{1 \leq i,j \leq n}$ be its combined matrix. Then
$$
a_{ij}\not=0 \Leftrightarrow  c_{ij}\not= 0,
$$
$1\le i,j\le n$.
\end{thm}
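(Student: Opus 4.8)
The plan is to prove the two implications separately, using the formula \eqref{combinadapormenores}: $c_{ij} = \frac{1}{\det A}(-1)^{i+j}a_{ij}A_{ij}$. The direction $a_{ij}=0 \Rightarrow c_{ij}=0$ is free from Remark~\ref{aij=0->cij=0}, so the whole content is in the converse: if $a_{ij}\neq 0$ then $c_{ij}\neq 0$, equivalently $A_{ij}\neq 0$ (since $\det A\neq 0$ because $A$ is ASSR hence nonsingular). So the statement reduces to: \emph{for an irreducible type-I staircase ASSR matrix, $a_{ij}\neq 0$ implies the complementary minor $A_{ij}=\det A(N\setminus\{i\}\,|\,N\setminus\{j\})\neq 0$.}

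First I would record the shape of an irreducible type-I staircase matrix guaranteed by Theorem~\ref{A}: the main diagonal, subdiagonal and superdiagonal entries are all nonzero, and by the staircase conditions \eqref{cerosdebajotipoI}--\eqref{cerosarribatipoI} the nonzero entries in each row (and column) form a contiguous block around the diagonal; writing $a_{ij}\neq 0$ exactly when $p_i \le j \le q_i$ for suitable nondecreasing sequences $p_i$ and $q_i$ (with $p_i\le i\le q_i$, $p_{i+1}\le i$, $q_i\ge i+1$ from irreducibility). The key observation is that the complementary submatrix $A(N\setminus\{i\}\,|\,N\setminus\{j\})$ obtained by deleting row $i$ and column $j$ is \emph{itself} (essentially) a staircase matrix whose ``diagonal'' path we must exhibit as nonzero: to show $A_{ij}\neq 0$ it suffices to produce one nontrivial minor of this submatrix that is nonzero, and since $A$ is ASSR \emph{every} nontrivial minor is nonzero (strictly signed), it is enough to show that $A(N\setminus\{i\}\,|\,N\setminus\{j\})$ \emph{contains a nontrivial submatrix of full size $n-1$} — i.e. that there is a bijection $\sigma$ from $N\setminus\{i\}$ to $N\setminus\{j\}$ with $a_{k,\sigma(k)}\neq 0$ for all $k$, which will be a diagonal of a nontrivial submatrix of $A$ of order $n-1$, and hence the corresponding minor $A_{ij}$ (after reindexing, the full determinant of the deleted submatrix, which contains this nontrivial submatrix as itself) is a nontrivial minor of $A$ and therefore nonzero.

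The core combinatorial step, then, is: given that $a_{ij}\neq 0$ (so $p_i\le j\le q_i$), construct an explicit ``matching path'' $\sigma:N\setminus\{i\}\to N\setminus\{j\}$ with each $a_{k,\sigma(k)}\neq 0$. I would do this by cases on the position of $j$ relative to $i$. If $j=i$: take $\sigma(k)=k$ for $k\neq i$, which works since all diagonal entries are nonzero. If $j>i$ (the entry is above the diagonal): for $k<i$ set $\sigma(k)=k$; for $i<k\le j$ set $\sigma(k)=k-1$ (legal because $a_{k,k-1}\neq 0$ by irreducibility of the subdiagonal and $k-1\ge i\ge p_i$, $k-1\le j-1<q_k$ needs checking via monotonicity of $q$); for $k>j$ set $\sigma(k)=k$. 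One checks this is a bijection onto $N\setminus\{j\}$. The symmetric case $j<i$ (entry below the diagonal) is handled the same way using superdiagonal entries $a_{k,k+1}\neq 0$. In each case one must verify that every used entry $a_{k,\sigma(k)}$ lies in the nonzero band $[p_k,q_k]$, which follows from the monotonicity of $p$ and $q$ (a consequence of the staircase conditions) together with the irreducibility bounds $p_{k+1}\le k\le q_k-1$.

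The main obstacle I anticipate is precisely the bookkeeping in that last verification: one has to be careful that shifting a whole run of indices by one keeps every entry inside the staircase band, and that the band is wide enough — this is exactly where irreducibility (subdiagonal and superdiagonal nonzero) is essential and where the proof would break for a general, possibly reducible, ASSR matrix (explaining why the general case in the next section only preserves the staircase \emph{type}, not the precise zero pattern). A cleaner packaging of the same idea, which I would probably adopt in the write-up, is to argue directly that the deleted submatrix $A(N\setminus\{i\}\,|\,N\setminus\{j\})$ is a type-I staircase matrix in its own right when $j=i$, and in the off-diagonal cases to peel off the triangular corner blocks (rows/columns $1,\dots,\min(i,j)-1$ and $\max(i,j)+1,\dots,n$, which contribute nonzero diagonal factors) and reduce to a small central block between indices $\min(i,j)$ and $\max(i,j)$ whose anti-triangular structure makes its determinant visibly a product of nonzero band entries; then invoking the ASSR property to conclude the whole complementary minor, being nontrivial, is nonzero. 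Finally I note that the type-II case, and hence the symmetric statement for type-II staircase ASSR matrices, follows immediately by multiplying by $P_n$ and applying Lemma~\ref{Lema-HORN}(b) together with Remark~\ref{tipoIPntipoII} and Proposition~\ref{sigPnA}.
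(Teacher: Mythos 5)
Your proposal is correct and follows essentially the same route as the paper: reduce to showing the complementary minor $A_{ij}$ is nonzero, observe that the main diagonal of the deleted submatrix consists of diagonal entries of $A$ together with the superdiagonal (or subdiagonal) entries $a_{k,k\pm 1}$, which are nonzero by the type-I staircase condition and by irreducibility via Theorem~\ref{A}, so the submatrix is nontrivial and the ASSR property forces $A_{ij}\neq 0$, with the converse coming from \eqref{combinadapormenores}. Your explicit matching $\sigma$ is precisely the monotone diagonal of $A[N\setminus\{i\}\,|\,N\setminus\{j\}]$ (it must be monotone for the nontriviality definition to apply), so the band bookkeeping with $p_i,q_i$ you worry about is not actually needed.
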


\begin{proof}
Let us suppose that  $a_{ij}\not= 0$ with $i>j$. By (\ref{combinadapormenores})
$$
c_{ij}=\dfrac{1}{\det(A)}(-1)^{i+j}a_{ij}A_{ij}.
$$
To prove that $c_{ij}\not=0$ it is sufficient to show that $A_{ij}\not= 0$. Given that $A$ is an ASSR matrix, we prove that the submatrix $M$ associated to $A_{ij}$ is a nontrivial submatrix. This submatrix is
$$
A[1,\dots,i-1,i+1,\dots,n|1,\dots,j-1,j+1,\dots,n ]=
$$
$$=\left(
                                                      \begin{array}{c|c|c}
                                                        A[1,\dots,j-1] & * & * \\ \hline
                                                         *  & A[j,\dots,i-1|j+1,\dots,i] & * \\ \hline
                                                         * &  * &  A[i+1,\dots,n]\\
                                                      \end{array}
                                                    \right).
$$

The diagonal entries of the main diagonal of $M$ are $a_{11}, a_{22}, \dots a_{j-1,j-1}$ (diagonal entries of $A[1,\dots,j-1] $), $a_{i+1,i+1}, a_{i+2,i+2},\dots, a_{nn}$ (diagonal entries of $ A[i+1,\dots,n]$) and $a_{j,j+1},a_{j+1,j+2},\dots , a_{i-1,i}$ (diagonal entries of $A[j,\dots,i-1|j+1,\dots,i]$). The elements $a_{11}, a_{22}, \dots a_{j-1,j-1}$ and $a_{i+1,i+1}, a_{i+2,i+2},\dots, a_{nn}$ are in the main diagonal of $A$, and therefore they are nonzero. On the other hand, as $A$ is an irreducible type-I staircase ASSR matrix, by Theorem \ref{A} the elements $a_{j,j+1},a_{j+1,j+2},\dots , a_{i-1,i}$ are nonzero. Then $A_{ij}$ is associated to a nontrivial submatrix of $A$ and it is nonzero. In consequence, $c_{ij}\not= 0$.

Analogously, if $i<j$ we can consider the transpose of $M$ to deduce the result.

For the converse, it is enough to take into account   equation   (\ref{combinadapormenores}).
\end{proof}

\begin{rmk}\label{eje-SR}
It is worth noting that a nonsingular irreducible SR  matrix does not necessarily satisfy the conclusion of the previous result, as the following example shows. Let $A_1$ be an irreducible SR nonsingular matrix with signature $\varepsilon=(-1,+1,-1)$
$$
A_1=
\left(\begin{array}{ccc}
 -1 & -3 &  -5 \\
 -1 & -6 &  -10 \\
 -1 & -15 & -29
 \end{array}\right).
$$
The combined matrix  ($C(A_1)=A_1 \circ (A_1^{-1})^T$) is
$$
C(A_1)=
\left(\begin{array}{ccc}
 2 & -19/4 &  15/4 \\
 -1 & 12 &  -10 \\
 0 &  -25/4 & 29/4
 \end{array}\right).
$$
Notice that the element $(3,1)$ of $C(A_1)$ is zero, while that in this position $A_1$ has the value $-1$.
\end{rmk}

From Lemma \ref{Lema-HORN},   Remark   \ref{tipoIPntipoII} and Theorem \ref{A}, we can deduce the following consequence.

\begin{coro}\label{Coro-A}
If $A=(a_{ij})_{1 \leq i,j \leq n}$ is an ASSR matrix, $C(A)=(c_{ij})_{1 \leq i,j \leq n}$ its combined matrix  and $A$ is either irreducible type-I staircase or type-II staircase with $P_nA$ irreducible, then $a_{ij}\ne 0$ if and only if $c_{ij}\ne 0$.
\end{coro}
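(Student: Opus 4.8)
The plan is to split the statement into the two cases contemplated by its hypothesis and reduce both of them to Theorem~\ref{a=0<->c=0}, which already settles the irreducible type-I staircase case. If $A$ is itself irreducible and type-I staircase, there is nothing to prove: Theorem~\ref{a=0<->c=0} gives $a_{ij}\ne 0 \Leftrightarrow c_{ij}\ne 0$ directly.

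For the remaining case, suppose $A$ is type-II staircase and that $B:=P_nA$ is irreducible. By Remark~\ref{tipoIPntipoII}, $B=P_nA$ is type-I staircase, and by Proposition~\ref{sigPnA}, $B$ is again ASSR (with the signature obtained from that of $A$ by the transformation stated there). Hence $B$ is an irreducible type-I staircase ASSR matrix, so Theorem~\ref{a=0<->c=0} applies to it: writing $B=(b_{ij})$ and $C(B)=(\tilde c_{ij})$, one has $b_{ij}\ne 0 \Leftrightarrow \tilde c_{ij}\ne 0$ for all $i,j$.

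It then remains to translate this equivalence back to $A$ and $C(A)$. Left multiplication by $P_n$ reverses the order of the rows, so $b_{ij}=a_{n-i+1,j}$; and by Lemma~\ref{Lema-HORN}(b), $C(B)=C(P_nA)=P_nC(A)$, whence $\tilde c_{ij}=c_{n-i+1,j}$. Substituting these into $b_{ij}\ne 0 \Leftrightarrow \tilde c_{ij}\ne 0$ and replacing the row index $i$ by $n-i+1$ (a bijection of $\{1,\dots,n\}$) yields exactly $a_{ij}\ne 0 \Leftrightarrow c_{ij}\ne 0$ for all $i,j$, which is the claim.

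I do not expect a genuine obstacle here: the argument is a routine reduction using the $P_n$-conjugation machinery already established in Lemma~\ref{Lema-HORN}, Remark~\ref{tipoIPntipoII} and Proposition~\ref{sigPnA}. The only point demanding a little care is the index bookkeeping, namely that the row reversal induced by $P_n$ acts compatibly on $A$ and on $C(A)$ (this is precisely $C(P_nA)=P_nC(A)$) and leaves the columns untouched, so that the entrywise equivalence is transported without distortion.
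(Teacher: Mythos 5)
Your proposal is correct and follows essentially the same route the paper intends: reduce the type-II case to the type-I case via $B=P_nA$, using Remark~\ref{tipoIPntipoII} and Proposition~\ref{sigPnA} to see that $B$ is an irreducible type-I staircase ASSR matrix, Theorem~\ref{a=0<->c=0} for $B$, and Lemma~\ref{Lema-HORN}(b) ($C(P_nA)=P_nC(A)$) to transport the zero-pattern equivalence back to $A$. The index bookkeeping $b_{ij}=a_{n-i+1,j}$, $\tilde c_{ij}=c_{n-i+1,j}$ is exactly right, so no gap remains.
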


To conclude this section, we present an example that illustrates some of the properties of the combined matrices associated to an irreducible ASSR matrix.

\begin{ejem} \label{e}
Let $A_2$ be an ASSR matrix with signature $\varepsilon=(-1,+1,-1,+1,$ $-1,-1)$
$$
A_2=
\left(\begin{array}{cccccc}
 -1 &  -2 &   0 &   0 &   0  &   0\\
 -4 & -10 &  -6 &  -8 &   0  &   0\\
  0 & -10 & -33 & -46 &  -9  &   -6\\
  0 & -16 & -60 & -92 & -60  & -36\\
  0 &  -2 & -21 & -70 & -242 & -443\\
  0 &   0 &   0 & -36 & -316 & -2823
 \end{array}\right).
$$
If the combined matrix of $A_2$ is calculated, we obtain that   $C(A_2)=A_2 \circ (A_2^{-1})^T$    is
$$
1.0e+06 *
\left(\begin{array}{cccccc}
 -0.6709 &  0.6709 &  0      &  0      &  0      &  0\\
  0.6709 & -0.8386 &  1.6695 & -1.5018 &  0      &  0\\
  0      &  0.3173 & -3.4744 &  3.2674 & -0.1134 &  0.0030\\
  0      & -0.1521 &  1.8926 & -1.9579 &  0.2264 & -0.0091\\
  0      &  0.0025 & -0.0877 &  0.1973 & -0.1210 &  0.0089\\
  0      &  0      &  0      & -0.0051 &  0.0079 & -0.0028
 \end{array}\right).
$$
Considering the obtained combined matrix  it is observed that the matrix $C(A_2)$ is not ASSR, since all the elements of $C(A_2)$ do not have the same sign. Nevertheless, as $A_2$ is irreducible, the zero pattern of $A_2$ is preserved (Theorem \ref{a=0<->c=0}).
As expected,  the sum of the elements of the rows (columns) of the combined matrix is equal to one, but this matrix is not doubly stochastic.
Besides, the matrix $C(A_2)$ does not have a checkerboard pattern, because $sign(c_{ij})=-(-1)^{i+j}$ or $c_{ij}=0$, nevertheless we can assure that $-C(A_2)$ has a checkerboard pattern (Lemma \ref{nulos-patron}).
\end{ejem}

\section{Combined matrices of general ASSR matrices}

Under the conditions of Theorem \ref{a=0<->c=0}, $C(A)$ is type-I staircase with the same zero pattern as $A$. In this section we analyze some  properties of the combined matrices associated to ASSR matrices when $A$ is not a irreducible matrix.
We shall prove that, if $A$ is type-I staircase, then $C(A)$ is also type-I staircase. However, the zero pattern is not preserved, as shown in the following example.

\begin{ejem}
Let $A_3$ be an ASSR matrix with signature $\varepsilon=(-1,+1,-1)$
$$
A_3=
\left(\begin{array}{ccc}
 -1 & -2 &  0 \\
 -1 & -3 &  0 \\
 -1 &  -4 & -5
 \end{array}\right).
$$
The combined matrix associated $C(A_3)=A_3 \circ (A_3^{-1})^T$ turns out to be
$$
C(A_3)=
\left(\begin{array}{ccc}
 3 & -2 &  0 \\
 -2 & 3 &  0 \\
 0 &  0 & 1
 \end{array}\right),
$$
so, it is possible to conclude that the combined matrix does not maintain the zero pattern of the initial matrix in all cases.
\end{ejem}

The next result shows that, when an ASSR matrix is considered, then the pattern type-I staircase is preserved by the combined matrix.

\begin{thm}\label{CA-tipo-I}
Let $A=(a_{ij})_{1 \leq i,j \leq n}$ be an ASSR matrix with signature $\varepsilon=(\varepsilon_1,\varepsilon_2,\dots , \varepsilon_n)$. Let $C(A)=(c_{ij})_{1 \leq i,j \leq n}$ be the combined matrix associated to $A$. If $A$ is type-I staircase, then $C(A)$ is also type-I staircase.
\end{thm}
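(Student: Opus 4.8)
The plan is to verify directly the three defining conditions (\ref{diagnonulatipoI})--(\ref{cerosarribatipoI}) of a type-I staircase matrix for $C(A)=(c_{ij})$, using the expression (\ref{combinadapormenores}), $c_{ij}=\frac{1}{\det A}(-1)^{i+j}a_{ij}A_{ij}$, together with the defining property of ASSR matrices that every nontrivial minor is strictly nonzero. Since $\det A\neq 0$ and $a_{ij}$ is a factor of $c_{ij}$, the equality $c_{ij}=0$ is equivalent to $a_{ij}=0$ or $A_{ij}=0$, so the whole argument reduces to controlling the complementary minors $A_{ij}$.

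For (\ref{diagnonulatipoI}), the diagonal of $C(A)$: $a_{ii}\neq 0$ because $A$ is type-I staircase, and the submatrix whose determinant is $A_{ii}$ is the principal submatrix $A[1,\dots,i-1,i+1,\dots,n]$, whose main diagonal entries $a_{11},\dots,a_{i-1,i-1},a_{i+1,i+1},\dots,a_{nn}$ are all nonzero; hence it is a nontrivial submatrix, so $\varepsilon_{n-1}A_{ii}>0$ and therefore $c_{ii}\neq 0$.

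For (\ref{cerosdebajotipoI}), suppose $c_{ij}=0$ with $i>j$ and take $k\geq i$, $l\leq j$; I must show $c_{kl}=0$. If $a_{ij}=0$, then condition (\ref{cerosdebajotipoI}) for $A$ gives $a_{kl}=0$, hence $c_{kl}=0$ by Remark \ref{aij=0->cij=0}. If $a_{ij}\neq 0$, then $c_{ij}=0$ forces $A_{ij}=0$; the associated submatrix $M=A[1,\dots,i-1,i+1,\dots,n\,|\,1,\dots,j-1,j+1,\dots,n]$ has, as in the proof of Theorem \ref{a=0<->c=0}, main diagonal $a_{11},\dots,a_{j-1,j-1}$, $a_{j,j+1},\dots,a_{i-1,i}$, $a_{i+1,i+1},\dots,a_{nn}$, and since the first and last groups are nonzero while $\det M=A_{ij}=0$ cannot be a nontrivial minor, some superdiagonal entry $a_{t,t+1}$ with $j\leq t\leq i-1$ must vanish. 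Then condition (\ref{cerosarribatipoI}) for $A$, applied with the indices $t$ and $t+1$, yields $A[1,\dots,t\,|\,t+1,\dots,n]=0$. Now for $k\geq i>t$ and $l\leq j\leq t$, in the submatrix whose determinant is $A_{kl}$ (obtained from $A$ by deleting row $k$ and column $l$) the first $t$ rows survive, and their nonzero entries lie in the $t-1$ columns indexed by $\{1,\dots,t\}\setminus\{l\}$; hence these $t$ rows are linearly dependent, so $A_{kl}=0$ and $c_{kl}=0$.

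Finally, (\ref{cerosarribatipoI}) for $C(A)$ follows by transposition: $C(A^{T})=C(A)^{T}$, and $A^{T}$ is again an ASSR type-I staircase matrix, so applying the argument just given to $A^{T}$ yields (\ref{cerosarribatipoI}) for $C(A)$. The hard part will be the second case of (\ref{cerosdebajotipoI}): extracting a zero superdiagonal entry of $A$ from the vanishing complementary minor $A_{ij}$, and then propagating the resulting zero block through the staircase structure to conclude $A_{kl}=0$ for all $k\geq i$, $l\leq j$; the remaining steps are routine once (\ref{combinadapormenores}) and the staircase definitions are in hand.
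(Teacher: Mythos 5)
Your proof is correct, and its skeleton matches the paper's: verify the three conditions of Definition \ref{def_Iescalonadas} via (\ref{combinadapormenores}), handle the diagonal through the nontrivial principal submatrix $A[1,\dots,i-1,i+1,\dots,n]$, and, when $a_{ij}\neq 0$ but $c_{ij}=0$ with $i>j$, use the diagonal of the complementary submatrix together with the ASSR property to extract a zero superdiagonal entry $a_{t,t+1}$, $j\le t\le i-1$; the upper triangle is treated by transposition in both arguments. The difference lies in how the zero is propagated. The paper only shows that the adjacent entries $c_{i+1,j}$ and $c_{i,j-1}$ vanish (with an implicit induction to cover all $k\ge i$, $l\le j$), and at the key point it asserts, by analogy with the proof of Theorem \ref{a=0<->c=0}, that the complementary minor $A_{i+1,j}$ is null once the product $a_{j,j+1}\cdots a_{i,i+1}$ vanishes, i.e.\ it relies on the fact that such ``trivial'' minors of a type-I staircase matrix are zero without proving it. You instead invoke (\ref{cerosarribatipoI}) for $A$ to get the zero block $A[1,\dots,t\,|\,t+1,\dots,n]=0$ and then conclude $A_{kl}=0$ for every $k\ge i$, $l\le j$ in one stroke by the rank argument: the first $t$ rows of the submatrix defining $A_{kl}$ live in $t-1$ coordinates, hence are dependent. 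This buys you a direct, self-contained proof (no neighbor induction, and the vanishing of the relevant complementary minors is proved rather than asserted), at the cost of a slightly longer argument than the paper's; the paper's version is terser but leaves that vanishing step to the reader.
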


\begin{proof}
We will check that $C(A)$ verifies the conditions of Definition \ref{def_Iescalonadas}.

The  diagonal entries of   $C(A)$   verify that
$$
c_{ii}=\dfrac{1}{\det A}a_{ii}A_{ii}.
$$
If row and column $i$ of $A$ are removed, then the nontrivial submatrix $M=A[1,\dots,i-1,i+1,\dots,n]$ is obtained and $A_{ii}=\det M$. Taking into account this condition, it is possible to conclude that $A_{ii}$ is nonzero.

Let us consider now that $i> j$ and that $c_{ij}=0$, then
\begin{equation}\label{cij=0}
c_{ij}=\dfrac{1}{\det(A)}(-1)^{i+j}a_{ij}A_{ij}=0.
\end{equation}
We are going to prove that $c_{i+1,j}=0$ and that $c_{i,j-1}=0$, these conditions are sufficient to fulfill (\ref{cerosdebajotipoI}).

As $A$ is type-I staircase, if $a_{ij}=0$, then $a_{i+1,j}=0$ and so $c_{i+1,j}=0$.

Let us now suppose  that $a_{ij} \neq 0$. Then as $c_{ij}=0$, by (\ref{cij=0}), it follows that $A_{ij}=0$. Analogous to the reasoning made in the proof of Theorem \ref{a=0<->c=0}, we can prove that some of the elements $a_{j,j+1},a_{j+1,j+2},\dots , a_{i-1,i}$ must be null, and so
\begin{equation}\label{prod}
a_{j,j+1} a_{j+1,j+2} \cdots  a_{i-1,i}=0.
\end{equation}

In order to prove that $c_{i+1,j}=0$, we analyze $A_{i+1,j}$. The minor $A_{i+1,j}$ derives from the submatrix
$$
H=
A[1,\dots,i,i+2,\dots,n|1,\dots,j-1,j+1,\dots,n ]=
$$
$$=\left(
                                                      \begin{array}{c|c|c}
                                                        A[1,\dots,j-1] & * & * \\ \hline
                                                         *  & A[j,\dots,i|j+1,\dots,i+1] & * \\ \hline
                                                         * &  * &  A[i+2,\dots,n]\\
                                                      \end{array}
                                                    \right).
$$
Analogously as was demonstrated in the proof of Theorem \ref{a=0<->c=0}, the entries that determine if $A_{i+1,j}$ is zero or not, are the  diagonal entries of $A[j,\dots,i|j+1,\dots,i+1]$, which are $a_{j,j+1},a_{j+1,j+2},\dots , a_{i-1,i}, a_{i,i+1}$. By (\ref{prod}), $ a_{j,j+1}\cdots a_{i-1,i}$ $a_{i,i+1}=0$ and then, this minor is  null.

In a similar way it can be proved that $c_{i,j-1}=0$.

Analogously, working with the matrix $A^T$, it is shown that the elements that are above the main diagonal verify the condition (\ref{cerosarribatipoI}).

So, the matrix $C(A)$ is type-I staircase.
\end{proof}

\begin{rmk} \label{r}
The previous result is not necessarily satisfied when a nonsingular type-I staircase SR matrix is considered. In order to observe this fact the next example is presented. Let $A_4$ be a nonsingular type-I staircase SR matrix with signature $\varepsilon=(+1,+1,-1)$
$$
A_4=\left(\begin{array}{ccc}
1 & 2 & 0\\
2 & 4 & 3\\
2 & 5 & 8
\end{array}\right).
$$
The combined matrix associated can be expressed as
$$
C(A_4)=A_4 \circ (A_4^{-1})^T=
\left(\begin{array}{ccc}
-5.6667 & 6.6667 & 0 \\
 10.667 &-10.667 &  1.0000\\
-4.0000 & 5.0000 & 0
\end{array}\right).
$$
Note that the entry $(3,3)$ of $C(A_4)$ is zero and hence this matrix is not type-I staircase.
\end{rmk}

The following result proves that the combined matrices also preserve the property of being type-II staircase.

\begin{coro}\label{Coro-B}
Let $A=(a_{ij})_{1 \leq i,j \leq n}$ be an ASSR matrix wiht signature $\varepsilon=(\varepsilon_1,\varepsilon_2,\dots , \varepsilon_n)$. Let $C(A)=(c_{ij})_{1 \leq i,j \leq n}$ be the combined matrix associated to $A$. If $A$ is type-II staircase, then $C(A)$ is also type-II staircase.
\end{coro}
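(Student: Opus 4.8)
The plan is to reduce the type-II case to the already established type-I case of Theorem \ref{CA-tipo-I} by conjugating (on the left) with the backward identity matrix $P_n$, using that $P_n$ is a permutation matrix with $P_n^2=I_n$.

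First I would invoke Remark \ref{tipoIPntipoII}: since $A$ is type-II staircase, the matrix $P_nA$ is type-I staircase. Next, since $A$ is ASSR it is nonsingular (so $C(A)$ is defined), and by Proposition \ref{sigPnA} the matrix $P_nA$ is again ASSR, with signature $\varepsilon'$ given by $\varepsilon'_m=(-1)^{m(m-1)/2}\varepsilon_m$. Thus $P_nA$ is a type-I staircase ASSR matrix, and Theorem \ref{CA-tipo-I} applies to it: $C(P_nA)$ is type-I staircase.

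Then I would use Lemma \ref{Lema-HORN}\,b), which gives $C(P_nA)=P_nC(A)$. Hence $P_nC(A)$ is type-I staircase. Applying Remark \ref{tipoIPntipoII} once more — this time in the direction ``type-I staircase $\Rightarrow$ $P_n(\cdot)$ is type-II staircase'', with $P_nC(A)$ playing the role of the input matrix — and using $P_n^2=I_n$, we conclude that $P_n\bigl(P_nC(A)\bigr)=C(A)$ is type-II staircase, which is the desired statement.

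I do not expect a genuine obstacle here: the argument is a short chain of already-proven facts, the only points to verify being that $P_n$ is its own inverse and that every cited result (Remark \ref{tipoIPntipoII}, Proposition \ref{sigPnA}, Theorem \ref{CA-tipo-I}, Lemma \ref{Lema-HORN}\,b)) is quoted in the correct direction. If one preferred a self-contained proof instead, the alternative would be to mimic the proof of Theorem \ref{CA-tipo-I} directly, checking the conditions \eqref{diagnonulatipoII}, \eqref{cerosdebajotipoII}, \eqref{cerosarribatipoII} of Definition \ref{def_IIescalonadas} by analysing the complementary minors $A_{ij}$ along the backward diagonal; but the reduction via $P_n$ is cleaner and avoids repeating the minor computations, so that is the route I would take.
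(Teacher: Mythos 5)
Your proposal is correct and follows essentially the same route as the paper's own proof: reduce to the type-I case via $B=P_nA$, apply Theorem \ref{CA-tipo-I}, and transfer back using Lemma \ref{Lema-HORN}\,b) together with Remark \ref{tipoIPntipoII}. Your explicit citation of Proposition \ref{sigPnA} to guarantee that $P_nA$ is ASSR is a small point the paper leaves implicit, but the argument is the same.
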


\begin{proof}
It is sufficient to note that if $A$ is type-II staircase, then $B=P_nA$ is type-I staircase, so by Theorem \ref{CA-tipo-I} $C(B)$ is type-I staircase and  by Lemma \ref{Lema-HORN}, the combined matrix of $A$, $C(A)=C(P_n B)=P_nC(B)$, is type-II staircase.
\end{proof}

\section{SR matrices with SR combined matrix and related examples}

Note that, if $A \geq 0$ ($A \leq 0$), then $C(A)\geq 0$ if and only if $A^{-1} \geq 0$ ($A^{-1} \leq 0$) (see Theorem 13 of \cite{BRU2014}). Matrices that satisfy this property are necessarily nonnegative monomial matrices, as are the corresponding combined matrices. A matrix $A$ is said to be monomial if each row and column has exactly one nonzero entry. Monomial matrices are indeed permutationally similar to diagonal matrices   with orthogonal rows (see \cite{Garcia-Planas2015})  .

In the results of \cite{BRU2016}, for a SR   matrix   $A$, it is required as an hypothesis that $C(A)\geq 0$ or it is proved that $C(A)$ is not nonnegative. In the following result we characterize the SR matrices whose combined matrix is SR.

\begin{thm}
Let $A$ be a nonsingular SR matrix. Then, the following conditions are equivalent:
\begin{enumerate}
\item[a)] $C(A)$ is SR.
\item[b)] $C(A)\geq 0$.
\item[c)] Either $A=D_n$, with $D_n=(d_{ij})_{1 \leq i,j \leq n}$ such that $d_{ij}=0$ for all $i \neq j$ and $d_{ii} d_{jj} > 0$, $\forall i \neq j$, or $A=P_n D_n$.
\item[d)] Either $C(A)=I_n$ or $C(A)=P_n$.
\end{enumerate}
\end{thm}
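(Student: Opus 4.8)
The plan is to establish the cycle of implications $(c)\Rightarrow(d)\Rightarrow(b)\Rightarrow(a)\Rightarrow(c)$, since $(c)$ is the most concrete description and the most convenient starting point. For $(c)\Rightarrow(d)$: if $A=D_n$ is a diagonal matrix with all diagonal entries of one sign, then $A=DE$ for suitable nonsingular diagonal matrices (indeed $A$ itself is diagonal), and by Lemma \ref{Lema-HORN}(a) we have $C(A)=C(I_n)=I_n$ (recall from the remark after Definition 1 that the combined matrix of a triangular matrix is the identity). If $A=P_nD_n$, then by Lemma \ref{Lema-HORN}(b), $C(A)=C(P_nD_n)=P_nC(D_n)=P_nI_n=P_n$. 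The implication $(d)\Rightarrow(b)$ is immediate: both $I_n$ and $P_n$ are nonnegative. The implication $(b)\Rightarrow(a)$ also requires little: $I_n$ is SR (all its nontrivial minors are $1$, and one checks it is SR with the appropriate signature), and $P_n$ is SR as well; but more carefully, one should argue directly that a nonnegative combined matrix of a nonsingular SR matrix is SR. Here I would lean on Lemma \ref{nulos-patron}: either $C(A)$ or $-C(A)$ has a checkerboard pattern, and since $C(A)\geq 0$, combining the checkerboard constraint with nonnegativity forces $C(A)$ to be a nonnegative monomial matrix (each row has at most one nonzero entry because adjacent nonzero entries in a row would need opposite signs); since each row sum of $C(A)$ equals one, that single entry in each row equals $1$, so $C(A)$ is a permutation matrix, and permutation matrices are SR.

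The substantive implication is $(a)\Rightarrow(c)$, and this is where the main work lies. Suppose $C(A)$ is SR. By Lemma \ref{nonoposi}, $C(A)$ is not nonpositive, so its signature has $\varepsilon_1(C(A))=+1$, meaning every entry of $C(A)$ is nonnegative; thus $C(A)\geq 0$. Now apply the checkerboard dichotomy of Lemma \ref{nulos-patron} together with $C(A)\geq 0$: as above, $C(A)$ must be a nonnegative monomial matrix with unit row sums, hence a permutation matrix $P$. The crux is then to show that $P\in\{I_n,P_n\}$ and to pull this back to the stated form of $A$. For the pull-back: $C(A)=P$ means $a_{ij}(A^{-1})_{ji}=P_{ij}$ for all $i,j$, so $a_{ij}=0$ whenever $P_{ij}=0$; thus the zero pattern of $A$ is contained in that of $P$, i.e., $A$ has at most one nonzero entry per row and per column, so $A$ is itself a (generalized) monomial matrix with the same support as $P$. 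Writing $A=P\tilde D$ for a nonsingular diagonal $\tilde D$ when $P$ is the underlying permutation, we then use that $A$ is SR: a monomial matrix supported on a permutation $\sigma$ is SR only for very restricted $\sigma$. The key obstacle is precisely this last combinatorial-sign argument — showing that the only permutations $\sigma$ for which $P_\sigma \tilde D$ can be SR are $\sigma=\mathrm{id}$ and $\sigma=$ the reversal permutation.

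To handle that obstacle I would argue as follows. If $A$ is SR and monomial with underlying permutation $\sigma$, consider $2\times 2$ minors $\det A[i,k\,|\,\sigma(i),\sigma(k)]$ for $i<k$; since $A$ is monomial this minor equals $-a_{i,\sigma(i)}a_{k,\sigma(k)}$ when $\sigma(i)>\sigma(k)$ and $+a_{i,\sigma(i)}a_{k,\sigma(k)}$ when $\sigma(i)<\sigma(k)$, while all other $2\times 2$ minors vanish. Sign regularity of order $2$ forces $\varepsilon_2\det A[i,k\,|\,\sigma(i),\sigma(k)]\geq 0$ with the \emph{same} sign for every such pair, which means $\sigma$ is either increasing on all of $N$ (so $\sigma=\mathrm{id}$) or decreasing on all of $N$ (so $\sigma$ is the reversal). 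More generally one must also rule out intermediate behavior by looking at three indices, but monotonicity of $\sigma$ on every pair already pins it down. In the first case $A=D_n$ and the sign condition $d_{ii}d_{jj}>0$ comes from $\varepsilon_1$ being constant (all diagonal entries of one sign); in the second case $A=P_nD_n$ with the analogous sign condition on $D_n$ inherited from the order-$1$ signature of $A$. I expect the bookkeeping with signatures — tracking how $\varepsilon_1,\varepsilon_2$ constrain the signs — to be the fiddly part, but conceptually everything reduces to: nonnegative $+$ checkerboard $+$ unit row sums $\Rightarrow$ permutation matrix, and SR $+$ monomial $\Rightarrow$ identity or reversal.
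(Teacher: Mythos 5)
Your easy legs (c)$\Rightarrow$(d)$\Rightarrow$(b), the deduction $C(A)\geq 0$ from a) via Lemma \ref{nonoposi}, and the final ``monomial SR $\Rightarrow$ identity or reversal'' argument via the nontrivial $2\times 2$ minors are all sound (the last one is a nice explicit justification of a step the paper leaves implicit). But the two steps on which everything hinges have genuine gaps. First, you claim that nonnegativity plus the checkerboard dichotomy of Lemma \ref{nulos-patron} (plus unit row sums) forces $C(A)$ to be monomial, ``because adjacent nonzero entries in a row would need opposite signs.'' That inference fails: a checkerboard pattern only forbids nonzero entries at positions of one parity of $i+j$, and two entries of a row that are two columns apart carry the \emph{same} prescribed sign. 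A doubly stochastic matrix supported on $\{(i,j): i+j \ \mathrm{even}\}$, e.g. with a $2\times 2$ block of entries $1/2$ in positions $(1,1),(1,3),(3,1),(3,3)$, meets all your stated constraints without being a permutation matrix. This breaks both your direct proof of (b)$\Rightarrow$(a) and the first half of (a)$\Rightarrow$(c). Second, in the pull-back you argue that $C(A)=P$ implies the zero pattern of $A$ is contained in that of $P$. This is the converse of Remark \ref{aij=0->cij=0}, and it is false for general SR matrices: $c_{ij}=0$ only gives $a_{ij}A_{ij}=0$, and the complementary minor may vanish while $a_{ij}\neq 0$; the paper's own matrix $A_1$ in Remark \ref{eje-SR} (where $c_{31}=0$ but $a_{31}=-1$) is exactly such a counterexample. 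So from $C(A)=P$ you cannot conclude that $A$ is monomial by zero-pattern considerations alone.

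The paper closes precisely this hole by working on $A$ rather than on $C(A)$: since $A$ is nonsingular SR, its order-one signature gives $A\geq 0$ or $A\leq 0$; then Theorem 13 of \cite{BRU2014} states that, for such $A$, $C(A)\geq 0$ if and only if $A^{-1}\geq 0$ (respectively $A^{-1}\leq 0$), and a nonnegative matrix with nonnegative inverse is necessarily monomial. Once $A$ is known to be monomial, your $2\times 2$-minor argument (together with the sign condition $d_{ii}d_{jj}>0$ coming from $\varepsilon_1$) correctly yields $A=D_n$ or $A=P_nD_n$, and the rest of your cycle goes through. So the repair is to replace your ``checkerboard $+$ nonnegative $\Rightarrow$ monomial $C(A)$'' and the zero-pattern pull-back by this inverse-positivity argument applied to $A$ itself.
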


\begin{proof}
a) $\Rightarrow$ b) Since $C(A)$ is SR, either $C(A) \geq 0$ or $C(A) \leq 0$. Then, by Lemma \ref{nonoposi}, b) holds.

b) $\Rightarrow$ c)   If the matrix $A$ is type-I staircase and $\varepsilon_1=+1$ ($\varepsilon_1=-1$), then $A \geq 0$ ($A \leq 0$), and $C(A) \geq 0$ if and only if $A^{-1} \geq 0$ ($A^{-1} \leq 0$) (see Theorem 13 de \cite{BRU2014}). This occurs if $A$ is a monomial matrix nonnegative (nonpositive), that is, $A=D_n=diag(d_{11}, d_{22}, \dots, d_{nn})$ with $d_{ii} d_{jj} > 0$, $\forall i \neq j$.  

c) $\Rightarrow$ d) If $A=D_n$, with $D_n=(d_{ij})_{1 \leq i,j \leq n}$ such that $d_{ij}=0$ for all $i \neq j$ and $d_{ii} d_{jj} > 0$, $\forall i \neq j$, it is immediate that $C(A)=I_n$. In the case that $A=P_n D_n$, it is concluded that $C(A)=P_n$.

d) $\Rightarrow$ a) Both matrices, $I$ and $P_n$ are SR.
\end{proof}

The previous result shows that requiring a nonsingular SR matrix the property that its combined   matrix   preserves the sign regularity is very restrictive, since this situation occurs only in the case of nonnegative combined and only very special monomial matrices can appear.

The last question that we address is whether, given a nonsingular SR matrix $A$, the matrix $|C(A)|$  preserves the sign regularity, and the analogous question with the property of being ASSR. Recall that with $|C(A)|$ we represent the matrix formed by the absolute values of the elements of $C(A)$.

The numerical experimentation carried out has shown that a general conclusion can not be established, since we have found numerous examples where $|C(A)|$ preserves the properties of being nonsingular SR or ASSR, and also a great number of them where those properties are not inherited.

We start by presenting an illustrative example in which $|C(A)|$ inherits the indicated properties.

\begin{ejem}
Let $A_5$ be an ASSR matrix (and hence is nonsingular SR) with signature $\varepsilon=(-1,-1,+1)$
$$
A_5=\left(\begin{array}{ccc}
-0.00001 & -1 & -1\\
-2 & -5 & -2\\
-3 & -1 & 0
\end{array}\right).
$$
In this case, the combined matrix obtained is
$$
C(A_5)=\left(\begin{array}{ccc}
0.0000028  & -0.8571 & 1.8571 \\
-0.2857 &  2.1429 &  -0.8571\\
 1.2857 & -0.2857 & 0
\end{array}\right).
$$

Observing the combined matrix, we have that $C(A_5)$ is not ASSR, although the  type-II and zero pattern of $A_5$ is preserved (corollaries \ref{Coro-A} y \ref{Coro-B}). On the other hand, we can verify that $|C(A_5)|=S_4 C(A_5) S_4$ is ASSR, particularly nonsingular SR. In addition, the matrix $C(A_5)$ has a checkerboard pattern (Lemma \ref{nulos-patron}).
\end{ejem}

Finally, we include an example for which $|C(A)|$ does not inherit the properties.

\begin{ejem}
Let $A_6$ be an ASSR matrix (nonsingular SR) with signature $\varepsilon=(-1,+1,+1,-1)$
$$
A_6=
\left(\begin{array}{cccc}
 -260  & -100 &  -71  &   0 \\
 -179 & -70 &  -51 & -10 \\
 -10 & -4  & -3  & -1 \\
  0 &  -1 &   -1 & -1
 \end{array}\right).
$$
The matrix $C(A_6)=A_6 \circ (A_6^{-1})^T$ is
$$
\left(\begin{array}{cccc}
 14.7170  & -98.1132 &  84.3962   &   0 \\
 -43.9057 & 250.9434 &  -211.6981 & 5.6604 \\
 30.1887  & -154.6415  & 130.1887  & -4.7358 \\
  0 &  2.8113 & -1.8868  & 0.0755
 \end{array}\right).
$$

The matrix $C(A_6)$ is not ASSR, although $C(A_6)$ is type-I staircase and the zero pattern of $A_6$ is maintained (theorems \ref{a=0<->c=0} y \ref{CA-tipo-I}). The matrix $|C(A_6)|$ is not nonsingular SR, since, for instance, the minors of order two $|C(A_6)|[1:2,2:3]$ and $|C(A_6)|[1:2,3:4]$ have opposite signs. Finally, the matrix $C(A_6)$ has a checkerboard pattern (Lemma \ref{nulos-patron}).
\end{ejem}

\section*{Acknowledgements}
This work has been partially supported by the Spanish Research   Grants   MTM2015-65433-P, TIN2017-87600-P and MTM2017-90682-REDT.



\begin{thebibliography}{99}

\bibitem{HORN1991}
{\sc R.A. Horn, C.R. Johnson},
{\em Topics in Matrix Analysis},
Cambridge Univ. Press, Cambridge, 1991.

\bibitem{MCAVOY1983}
{\sc T.J. McAvoy},
{\em Interaction Analysis: Principles and Applications},
Vol. 6 of Monograph Series, Instrument Society of America, 1983.

\bibitem{BRUALDI1988}
{\sc R. Brualdi},
{\em Some applications of doubly stochastic matrices},
Linear Algebra Appl. {\bf 107} (1988) 77-100.

\bibitem{MOURAD2013}
{\sc B. Mourad},
{\em Generalization of some results concerning eigenvalues of a certain class of matrices and some applications},
Linear and Multilinear Algebra {\bf 61} (2013) 1234-1243.

\bibitem{BRU2014}
{\sc R. Bru, M. Gass\'{o}, I. Gim\'{e}nez, M. Santana},
{\em Nonnegative combined matrices},
J. Appl. Math. {\bf 2014} (2014), Article ID 182354, 5 pp.

\bibitem{BRU2016}
{\sc R. Bru, M. Gass\'{o}, I. Gim\'{e}nez, M. Santana},
{\em Combined matrices of sign regular matrices},
Linear Algebra Appl. {\bf 498}   (2016)    88-98.

\bibitem{FIEDLER2011}
{\sc M. Fiedler, T.L. Markham},
{\em Combined matrices in special classes of matrices},
Linear Algebra Appl. {\bf 435} (2011) 1945-1955.

\bibitem{FIEDLER2012}
{\sc M. Fiedler, F. J. Hall},
{\em G-matrices},
Linear Algebra Appl. {\bf 436} (2012), 731-741.

\bibitem{ANDO1987}
{\sc   T. Ando},
{\em Total positive matrices},
Linear Algebra Appl. {\bf 90} (1987) 165--219.

\bibitem{PENA2003}
{\sc J.M. Pe\~na},
{\em On nonsingular sign regular matrices},
Linear Algebra Appl. {\bf 359} (2003) 91-100.

\bibitem{ALONSO2015}
{\sc P. Alonso, J.M. Pe\~na, M.L. Serrano},
{\em On the characterization of almost strictly sign regular matrices},
J. Comput. Appl. Math. {\bf 275} (2015) 480-488.

\bibitem{HUANG2012}
{\sc R. Huang},
{\em Nonsingular almost strictly sign regular matrices},
Linear Algebra Appl. {\bf 436} (2012) 4179-4192.

\bibitem{SERRANO2017}
{\sc M.L. Serrano},
{\em Subclases de matrices signo-regulares: propiedades y m\'etodos num\'ericos adaptados},
Tesis Doctoral, Universidad de Oviedo (2017).

  
\bibitem{Garcia-Planas2015}
{\sc M.I. Garcia-Planas, M.D. Magret},  
{\em Eigenvalues and eigenvectors of monomial matrices. A: Congreso de Ecuaciones Diferenciales y Aplicaciones / Congreso de Matemática Aplicada. "Proceedings of the XXIV Congress on Differential Equations and Applications/XIV Congress on Applied Mathematics"},
Cadiz, Universidad de Cádiz, 2015, p. 963-966.
  {red}

\end{thebibliography}
\end{document}